\numberwithin{equation}{section}
\theoremstyle{plain}
\newtheorem{theorem}  [equation]{Theorem}%[section]
\newtheorem{prop}     [equation]{Proposition}
\theoremstyle{definition}
\newtheorem{dfn}      [equation]{Definition}
\theoremstyle{remark}
\newtheorem{rem}      [equation]{Remark}
\DeclareMathOperator	{\Homom}	{Hom}
\DeclareMathOperator	{\End}		{End}
\DeclareMathOperator	{\Mod}		{mod}
\newcommand	{\Ho}[2][]	{\ensuremath{\Homom_{#1}(#2)}}	%#1-Hom
\newcommand	{\m}[2]		{\ensuremath{\mu_{#1}(#2)}}
\newcommand	{\my}[1]	{\ensuremath{\mu_k(#1)}}
\newcommand	{\K}		{\ensuremath K}			%Körper
\newcommand	{\G}		{\ensuremath{\Gamma}} 		%Köcher Gamma
\newcommand	{\Qu}		{\ensuremath Q} 		%Köcher Q
\newcommand	{\tq}[2]	{\ensuremath{\Qu_{#1}(#2)}}		% Köcher der tilted algebra
\newcommand	{\cq}[1]	{\ensuremath{\Qu_{\mathcal C}(#1)}}	% Köcher der cluster-tilted algebra
\newcommand	{\cc}		{\ensuremath{\mathcal C}}		% cluster category
\title[Subquivers of mutation-acyclic quivers]{Subquivers of mutation-acyclic quivers are mutation-acyclic}
\author{Matthias Warkentin}
\date{\today}
\address{Fakultät für Mathematik, Technische Universität 
Chemnitz, D-09107 Chemnitz, Germany}
\email{matthias.warkentin@mathematik.tu-chemnitz.de}
\subjclass[2010]{05E10;13F60,16G20}
\begin{document}

\begin{abstract}
 Quiver mutation plays a crucial role in the definition of cluster algebras by Fomin and Zelevinsky. It induces an equivalence relation on the set of all quivers without loops and two-cycles. A quiver is called mutation-acyclic if it is mutation-equivalent to an acyclic quiver. This note gives a proof that full subquivers of mutation-acyclic quivers are mutation-acyclic.
\end{abstract}

\maketitle

\section{Introduction}
Quiver mutation (see Definition \ref{dfn:quivmut}) was introduced together with cluster algebras by Fomin and Zelevinsky in \cite{cluster1} in connection with total positivity and canonical bases. This led to the development of a quickly growing theory with many interesting connections to other fields, see \cite{cdm-notes} for a nice survey article. Of special interest is a deep relation with representation theory obtained by what is called \textit{categorification} of (acyclic) cluster algebras; we refer to Keller \cite{Keller_categorification} for an excellent introduction. (We note that we actually do not need cluster algebras directly, but only the categorification of quiver mutation, their combinatorial key structure. So we refer to the mentioned articles for the definition of cluster algebras and more information on the subject.) This relation yields an interplay between combinatorics and representation theory. For some problems concerning quiver mutations there are representation theoretic but no combinatorial solutions, for some obvious questions no answer has been found yet. For example it is not known how to decide whether two given quivers are mutation-equivalent (see Definition~\ref{dfn:mutequ}) or not. In fact this note is an example for this interplay concerning another question: Since acyclic quivers are of particular importance in representation theory, the question arises which quivers can be obtained from acyclic quivers via mutation. Using the said relation we give a certain necessary condition, namely all full subquivers of such a quiver must be of the same kind.

Shortly after finishing this note we found that this fact was already mentioned in \cite{BMRvia}. Unfortunately it was not yet included in the preprint version \href{http://de.arxiv.org/pdf/math.RT/0412077.pdf}{math.RT/0412077} that we had checked. In fact we use a result from this paper (but see also Remark~\ref{rem:hubery}), yet our proof is slightly more explicit.

The content is organised as follows. Section \ref{sec:result} contains the precise formulation of the main result Proposition \ref{prop:mac}, Section \ref{sec:tools} collects the necessary representation theoretic facts, finally in Section \ref{sec:proof} we give the proof of Proposition \ref{prop:mac} and some remarks.

\noindent\textbf{Acknowledgements.} I would like to thank Philipp Lampe, Daniel Rohleder and my advisor Dieter Happel for reading earlier versions of this note.

\section{Notation and main result}
\label{sec:result}
In this note a quiver is a (finite) directed graph, i.e.\ a finite set of vertices and a finite set of arrows starting and ending at vertices. Multiple arrows and loops are allowed. A full subquiver consists of a subset of the vertices and all arrows starting and ending in this subset. We always mean full subquivers.
\begin{dfn}
\label{dfn:quivmut}
Given a finite quiver $\G$ with vertex set $I$ without loops or two-cycles, for any vertex $k \in I$ we define a new quiver $\my{\G}$, the \textbf{mutation} of $\G$ in direction $k$, in three steps:
\begin{enumerate}
	\item For any two vertices $i$ and $j$ with $a$ arrows from $i$ to $k$ and $b$ arrows from $k$ to $j$ add $ab$ arrows from $i$ to $j$.
	\item Remove any two-cycles created in the first step. To be more precise: if there have been $c$ arrows from $j$ to $i$, delete $\min\{ab,c\}$ arrows in both directions.
	\item Change the orientation of all arrows incident to $k$ and replace $k$ with a new vertex $k^*$.
\end{enumerate}
\end{dfn}
It is easy to check that mutation is involutive in the sense that if $\G$ is a quiver as above with a vertex $k$, then $\m{k^*}{\my{\G}}\cong\G.$ (Note that $\my{\G}$ again contains neither loops nor two-cycles.)

\begin{dfn}
\label{dfn:mutequ}
Two quivers as above are called \textbf{mutation-equivalent} if one can be transformed into a quiver isomorphic to the other via a sequence of mutations. Since mutations are involutive this indeed defines an equivalence relation on the set of all finite quivers without loops and two-cycles. A quiver is called \textbf{mutation-acyclic} if it is mutation-equivalent to an acyclic quiver, else it is called \textbf{mutation-cyclic}.
\end{dfn}
The main result can now be stated as follows.
\begin{prop}
\label{prop:mac}
 Let $\Qu$ be a mutation-acyclic quiver and $\Qu'$ a full subquiver. Then $\Qu'$ is also mutation-acyclic.
\end{prop}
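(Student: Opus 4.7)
My plan is to pass to the categorification of quiver mutation via cluster categories of acyclic hereditary algebras. Pick an acyclic quiver $\Qu_0$ mutation-equivalent to $\Qu$, and consider the cluster category $\cc = \cc_{\K \Qu_0}$. By the theorem of Buan--Marsh--Reineke--Reiten--Todorov that underlies categorification and is recalled in \cite{BMRvia}, the quivers in the mutation class of $\Qu_0$ are exactly the Gabriel quivers of the endomorphism algebras $\End_{\cc}(T)^{\mathrm{op}}$ of cluster-tilting objects $T$ in $\cc$. Hence there exists a cluster-tilting object $T = T_1 \oplus \cdots \oplus T_n$ in $\cc$ whose endomorphism quiver is $\Qu$, with the vertices of $\Qu$ in bijection with the indecomposable summands $T_i$.

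Given the full subquiver $\Qu'$ on a subset $S$ of the vertices, I would split $T = T_S \oplus R$ with $T_S = \bigoplus_{i \in S} T_i$ and $R = \bigoplus_{i \notin S} T_i$; both are rigid. The next step is the 2-Calabi--Yau reduction of Iyama--Yoshino with respect to $R$: the additive quotient of the subcategory $R^{\perp} = \{X \in \cc : \Homom_{\cc}(R, X[1]) = 0\}$ by the ideal of morphisms factoring through $\mathrm{add}(R)$ is again a 2-CY triangulated category $\cc'$, and the image of $T_S$ in $\cc'$ is a cluster-tilting object. A straightforward idempotent computation shows that the Gabriel quiver of the endomorphism algebra of this image is exactly $\Qu'$: the compositions through $R$ that contribute to $\mathrm{rad}^2$ in $\End_{\cc}(T)$ become zero in $\cc'$, so the two effects cancel in the count of arrows between vertices of $S$.

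The last and main step is to identify $\cc'$ itself as the cluster category of an acyclic hereditary algebra, say $\K \Qu_0^{(S)}$. This is the ingredient borrowed from \cite{BMRvia}: the right perpendicular category (in the sense of Geigle--Lenzing / Schofield) of the rigid module associated with $R$ is equivalent to the module category of a smaller acyclic hereditary algebra, and this equivalence lifts to a triangle equivalence between $\cc'$ and the cluster category of $\K \Qu_0^{(S)}$. Once this is granted, the image of $T_S$ is a cluster-tilting object in a cluster category of acyclic type, so by the BMR characterisation its quiver $\Qu'$ lies in the mutation class of the acyclic quiver $\Qu_0^{(S)}$, whence $\Qu'$ is mutation-acyclic. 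I expect the hardest part of the author's argument to be precisely this perpendicular-category identification, carried out in a concrete explicit form rather than as an abstract appeal.
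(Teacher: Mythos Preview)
Your outline is sound, but the route is genuinely different from the paper's, and your guess about what the author does in the ``hardest step'' is off.

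The paper does \emph{not} use Iyama--Yoshino reduction or perpendicular categories. Instead it removes one vertex at a time: given $T$ with $\cq{T}\cong\Qu$ and the summand $T_v$ corresponding to the deleted vertex, it views $T$ as a tilting module over some hereditary $H$ (Theorem~\ref{thm:ct-objects}(a)), takes the Bongartz complement $B$ of $T_v$ in $\Mod H$, and sets $C=T_v\oplus B$. Connectedness of the link $lk(T_v)$ in the exchange graph (Theorem~\ref{thm:connected}, from \cite{CK2}) provides a mutation sequence from $T$ to $C$ avoiding $T_v$; applying it to $\Qu'$ yields the subquiver $\Qu_B$ of $\cq{C}$ on the $B$-summands. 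Acyclicity of $\Qu_B$ is then checked directly from the ABS description of $\cq{C}$ (Theorem~\ref{thm:relations}): either $T_v$ is projective and $C\cong{}_HH$, or $\Ho[H]{T_v,B}=0$ and $\End_H(B)$ is hereditary, so no relation-arrows land among the $B$-summands. Thus the paper stays inside the original cluster category throughout and produces an explicit mutation sequence together with a concrete acyclic target.

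Your approach is more categorical: reduce by $R$, recognise the reduced $2$-CY category as a cluster category of acyclic type, and invoke the BMR characterisation there. This handles several deleted vertices at once and is conceptually clean, but it trades the paper's elementary link/Bongartz argument for heavier machinery. One point to tighten: the assertion that the Iyama--Yoshino reduction $\cc'$ is triangle-equivalent to $\cc_{\K\Qu_0^{(S)}}$ is correct, but it is not really in \cite{BMRvia}; you need the Geigle--Lenzing/Schofield perpendicular-category result on the module level together with a separate argument (e.g.\ via Keller--Reiten or a direct comparison of orbit categories) that this lifts to the cluster categories. Your idempotent computation identifying the quiver of $\bar T_S$ in $\cc'$ with $\Qu'$ is fine.
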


\section{Tools from representation theory}
\label{sec:tools}
We will prove the main result by means of representation theory. The necessary tools are introduced in this section. For unexplained terminology from representation theory and in particular tilting theory we refer to the literature (e.g.\ \cite{ARS,ASS1}).

Let $H$ be a finite-dimensional hereditary algebra with $n$ simples over an algebraically closed field $\K$. We consider the corresponding cluster category $\cc:=\cc_{H}$ as introduced in \cite{BMRRT}. We use the fact that it categorifies quiver mutation and allows us to apply representation theory. The connection is as follows. In $\cc$ we can consider the (basic) maximal rigid objects called cluster-tilting objects. We recall the relevant results from \cite{BMRRT}:
\begin{theorem}
\begin{enumerate}
 \item Each cluster-tilting object has precisely $n$ indecomposable summands and is induced by a tilting module over some hereditary algebra $H'$ derived equivalent to $H$ via the inclusion $\Mod H' \hookrightarrow \cc_{H'}\cong\cc$.
\item Let $T=\bigoplus_{i=1}^n T_i$ be a cluster-tilting object. Then for any $k$ there is precisely one indecomposable object $T_k^*\ncong T_k$ such that $T'=\bigoplus_{i\neq k} T_i \oplus T_k^*$ is again a cluster-tilting object.
\end{enumerate}
\label{thm:ct-objects}
\end{theorem}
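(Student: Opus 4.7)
The plan is to work inside the orbit-category description $\cc=\cc_H=D^b(\Mod H)/\tau^{-1}[1]$ from \cite{BMRRT}, whose fundamental domain consists of the indecomposable $H$-modules together with the shifts $P[1]$ of the indecomposable projectives. Cluster-tilting objects are characterised as basic $T\in\cc$ with $\Ho[\cc]{T,T[1]}=0$ that are maximal with this property; equivalently, $\operatorname{add}T=\{X\in\cc : \Ho[\cc]{T,X[1]}=0\}$. Throughout, I would exploit the $2$-Calabi--Yau duality $\Ho[\cc]{X,Y[1]}\cong D\Ho[\cc]{Y,X[1]}$ of $\cc$.

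For part (a), I would first handle the case where every indecomposable summand of $T$ already lies in the image of $\Mod H$. The $2$-CY identification decomposes $\Ho[\cc]{T,T[1]}$ into a ``shift part'' isomorphic to $\operatorname{Ext}^1_H(T,T)$ and a dual ``$\tau$-part'', so rigidity in $\cc$ forces $\operatorname{Ext}^1_H(T,T)=0$; a standard argument (using that $H$ is hereditary and that $T$ is maximal rigid) then shows $T$ is a classical tilting $H$-module, hence has exactly $n$ indecomposable summands by Bongartz's completion. For a general cluster-tilting object I would reduce to this case by induction on the number of summands of the form $P[1]$: each such summand can be rotated out by applying a BGP-type reflection at a source, which lifts to an autoequivalence of $\cc$ and replaces $H$ by a derived-equivalent hereditary algebra $H'$. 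After finitely many such reflections, $T$ becomes a tilting module over some $H'$, and part (a) follows.

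For part (b), I would construct $T_k^*$ via an exchange triangle. Setting $\bar T:=\bigoplus_{i\neq k}T_i$, take a minimal right $\operatorname{add}\bar T$-approximation $f\colon B\to T_k$ and complete to a triangle
\[
T_k^* \longrightarrow B \xrightarrow{f} T_k \longrightarrow T_k^*[1].
\]
Applying $\Ho[\cc]{\bar T,-}$ and using that $\bar T$ is rigid gives $\Ho[\cc]{\bar T,T_k^*[1]}=0$, and $2$-CY duality then yields $\Ho[\cc]{T_k^*,\bar T[1]}=0$ and $\Ho[\cc]{T_k^*,T_k^*[1]}=0$, so $\bar T\oplus T_k^*$ is rigid. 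A Krull--Schmidt argument applied to the minimality of $f$ shows $T_k^*$ is indecomposable, and $T_k^*\ncong T_k$ since otherwise $f$ would admit a section, placing $T_k$ in $\operatorname{add}\bar T$. Combined with the summand count from (a), maximality yields that $\bar T\oplus T_k^*$ is cluster-tilting. For uniqueness, any other completion $\bar T\oplus X$ with $X\ncong T_k$ fits in a parallel triangle, and both connecting morphisms live in the $1$-dimensional quotient of $\Ho[\cc]{T_k,T_k^*[1]}$ by morphisms factoring through $\operatorname{add}\bar T$; comparing the two triangles forces $X\cong T_k^*$.

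The main obstacle is the reduction step in part (a): transporting a general cluster-tilting object so that all summands land in the image of $\Mod H'$ for some derived-equivalent $H'$. This step is logically intertwined with (b), since iterated mutations are needed to effect the reduction. The cleanest order is therefore to prove the exchange-triangle construction in (b) first, then use it inductively to prove (a). The technical heart of both parts is the dimension-count lemma showing that an almost-complete cluster-tilting object admits exactly two completions, which ultimately rests on the $2$-Calabi--Yau property together with the finite-dimensionality of the relevant $\operatorname{Ext}$-spaces.
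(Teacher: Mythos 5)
The paper itself gives no proof of Theorem~\ref{thm:ct-objects}: it is recalled verbatim from \cite{BMRRT} and used as a black box, so your sketch can only be measured against the arguments in that reference. For part (a) your outline is essentially the one in \cite{BMRRT} and is sound: for modules $X,Y$ one has $\operatorname{Ext}^1_{\cc}(X,Y)\cong\operatorname{Ext}^1_H(X,Y)\oplus\operatorname{Ext}^1_H(Y,X)^{*}$, so a rigid object lying in $\Mod H$ is an exceptional module and maximality makes it a tilting module; the summands of the form $P_i[1]$ are then removed by passing to a derived equivalent hereditary algebra. The one point you should make explicit is why the reduction step is available at all: if $P_i[1]$ is a summand and $T_0$ is the module part of $T$, then $\operatorname{Ext}^1_{\cc}(P_i[1],T_0)\cong\Ho[H]{P_i,T_0}$, so rigidity forces $\Ho[H]{P_i,T_0}=0$, i.e.\ $T_0$ is not supported at the vertex $i$; this is what permits the reorientation/tilt and guarantees termination.

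The genuine gap is in part (b). From the exchange triangle $T_k^*\to B\to T_k\to T_k^*[1]$ you correctly obtain $\Ho[\cc]{\bar T,T_k^*[1]}=0$ (surjectivity of $\Ho[\cc]{\bar T,B}\to\Ho[\cc]{\bar T,T_k}$ from the approximation property together with $\Ho[\cc]{\bar T,B[1]}=0$), and $2$-Calabi--Yau duality then gives $\Ho[\cc]{T_k^*,\bar T[1]}=0$. But the vanishing $\Ho[\cc]{T_k^*,T_k^*[1]}=0$ does \emph{not} follow from these by another application of duality: applying $\Ho[\cc]{T_k^*,-}$ to the rotated triangle only exhibits $\Ho[\cc]{T_k^*,T_k^*[1]}$ as the cokernel of $\Ho[\cc]{T_k^*,B}\to\Ho[\cc]{T_k^*,T_k}$, and nothing established so far forces that cokernel to vanish, since $T_k^*\notin\operatorname{add}\bar T$. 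Likewise, your uniqueness argument presupposes that the space of connecting morphisms modulo those factoring through $\operatorname{add}\bar T$ is one-dimensional --- but that is exactly the ``dimension-count lemma'' which your final paragraph defers and never supplies, so the argument is circular as written. In \cite{BMRRT} both points are settled not by a self-contained triangulated computation but by using part (a) to place the almost complete object $\bar T$ inside a module category and invoking the classical theory of complements of almost complete tilting modules over a hereditary algebra (exactly two complements if and only if the module is sincere, with the shifted projective supplying the missing complement otherwise). The purely $2$-Calabi--Yau argument you are reaching for does exist in the later literature, but it requires the stronger characterisation $\operatorname{add}T=\{X:\Ho[\cc]{T,X[1]}=0\}$ and several additional steps; in the form given, part (b) is an outline rather than a proof.
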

With a cluster-tilting object $T$ in $\cc$ we associate the quiver $\cq{T}$ of the cluster-tilted algebra $\End_{\cc}(T)^{op}$. Theorem \ref{thm:ct-objects}(b) allows to define a mutation of cluster-tilting objects in the obvious way, namely we say that $T'$ is the mutation of $T$ in direction $T_k$. For us the following result is crucial.
\begin{theorem}[\cite{BMRvia}]
\label{thm:clusterquivmut}
 In the above notation the quiver $\cq{T'}$ is obtained from $\cq{T}$ by mutating at the vertex $T_k$.
\end{theorem}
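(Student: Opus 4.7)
The plan is to compute the arrows in $\cq{T'}$ directly in terms of morphisms in $\cc$, starting from a dictionary between arrows and irreducible maps. Recall that for a basic cluster-tilting object $T = \bigoplus_{i=1}^n T_i$, the number of arrows from vertex $i$ to vertex $j$ in $\cq{T}$ equals $\dim_\K \mathrm{Irr}(T_i, T_j) = \dim_\K \mathrm{rad}(T_i,T_j)/\mathrm{rad}^2(T_i,T_j)$, computed in the additive category $\mathrm{add}(T) \subseteq \cc$. Fix a vertex $k$ and consider the two exchange triangles
\[
T_k^* \xrightarrow{u} B \xrightarrow{v} T_k \to T_k^*[1], \qquad T_k \xrightarrow{u'} B' \xrightarrow{v'} T_k^* \to T_k[1],
\]
where $v, v'$ are minimal right $\mathrm{add}(T/T_k)$-approximations and $u, u'$ are minimal left approximations into $\mathrm{add}(T/T_k) = \mathrm{add}(T'/T_k^*)$. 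The first step is to identify multiplicities: if $B \cong \bigoplus_i T_i^{\beta_i}$ and $B' \cong \bigoplus_i T_i^{\gamma_i}$, then $\beta_i$ is the number of arrows $i \to k$ in $\cq{T}$ and $\gamma_i$ is the number of arrows $k \to i$, since by minimality the components of $v$ and $u'$ supply bases of the relevant $\mathrm{Irr}$-spaces incident to $T_k$.

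Next I would handle arrows incident to $k^*$ in $\cq{T'}$. Running the same argument for $T'$, the \emph{same} triangles compute these arrows, but with the roles of $T_k$ and $T_k^*$ swapped; hence arrows incident to $k^*$ in $\cq{T'}$ are precisely the arrows incident to $k$ in $\cq{T}$ with reversed orientation, giving mutation rule (c). For arrows between $T_i$ and $T_j$ with $i,j \neq k$, the key claim is that every irreducible map in $\mathrm{add}(T')$ is either (i) the image of an irreducible map in $\mathrm{add}(T)$, or (ii) a composition $T_i \to T_k^* \to T_j$ assembled from components of $u$ and $v'$. Counting independent compositions of type (ii) yields exactly $\beta_i \gamma_j$ candidate new arrows from $i$ to $j$, which is rule (a) with $a = \beta_i$ and $b = \gamma_j$.

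The hard step is the $\min\{ab,c\}$ cancellation (rule (b)). The strategy is to apply $\Ho[\cc]{T_i,-}$ and $\Ho[\cc]{-,T_j}$ to the exchange triangles and exploit rigidity $\mathrm{Ext}^1_\cc(T,T) = 0 = \mathrm{Ext}^1_\cc(T',T')$ to force vanishing of the connecting terms and thereby obtain short exact sequences on the relevant pieces. A paired composition in type (ii) running against a pre-existing arrow $T_j \to T_i$ in $\cq{T}$ can be prolonged through the other exchange triangle into a map factoring through $T_k$, which becomes reducible in $\mathrm{add}(T')$; such factorisations cancel the overlap between the $\beta_i\gamma_j$ new arrows and the $c$ pre-existing opposite arrows, and the cancellation must be total on one side because $\cq{T'}$ contains no 2-cycles (a consequence of rigidity of $T'$ together with $T_i \not\cong T_j$ in $\cc$). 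The main obstacle is twofold: first, proving that the cancellation count is \emph{exactly} $\min\{\beta_i\gamma_j, c\}$ rather than merely some amount, which requires a dimension count in the long exact sequences obtained from the exchange triangles; second, ruling out any irreducible maps in $\mathrm{add}(T')$ not accounted for by (i) or (ii), which again rests on a careful approximation argument using the minimality of $u, v, u', v'$ and the rigidity of $T$ and $T'$.
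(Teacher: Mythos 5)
The paper offers no proof of this theorem --- it is imported from \cite{BMRvia} and used as a black box --- so the only meaningful comparison is with the proof in that source. There the argument is not a direct count of irreducible maps between $T_i$ and $T_j$: Buan, Marsh and Reiten first show that the factor algebras of $\End_{\cc}(T)^{op}$ and $\End_{\cc}(T')^{op}$ by the ideals generated by the idempotents at $T_k$ and $T_k^*$ are isomorphic, identify the arrows into and out of the exchanged vertex via the two exchange triangles, and then invoke a combinatorial lemma saying that this data, together with the absence of loops and $2$-cycles in quivers of cluster-tilted algebras, forces $\cq{T'}$ to be the Fomin--Zelevinsky mutation of $\cq{T}$. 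Your head-on computation of all arrow multiplicities is legitimate in spirit, but it is a genuinely different and harder route than the one actually taken.

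More importantly, the two ``obstacles'' you name at the end are not loose ends but the entire content of the theorem, and neither is resolved. First, the claim that every irreducible map in $\operatorname{add}(T')$ between $T_i$ and $T_j$ with $i,j\neq k$ is either an old irreducible map or a composite through $T_k^*$ is precisely what must be proved; no argument is given, and this is exactly what the factor-algebra isomorphism in \cite{BMRvia} is designed to capture. Second, the exact cancellation $\min\{ab,c\}$ is deferred to ``a dimension count'' that is not carried out, and your justification that the cancellation is total on one side because $\cq{T'}$ has no $2$-cycles leans on the no-loops-no-$2$-cycles property of cluster-tilted algebras, which is itself a nontrivial theorem and not, as you suggest, a formal consequence of rigidity of $T'$ together with $T_i\ncong T_j$; in \cite{BMRvia} it is established separately and then fed into the combinatorial lemma. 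The parts you do argue --- that the multiplicities $\beta_i,\gamma_i$ in the minimal approximations count the arrows at $k$, and that the same exchange triangles serve $T'$ so the arrows at $k^*$ are the reversed ones --- are correct and consistent with the literature, but as it stands your text is a map of where a proof would have to go rather than a proof.
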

The fact that every cluster-tilting object $T$ in $\cc$ is induced by a tilting module over a hereditary algebra $H$ yields another description of the quiver $\cq{T}$ as follows. For a tilting module $T$ over $H$ denote by $\tq{H}{T}$ the ordinary quiver of the tilted algebra $\End_{H}(T)^{op}$ and choose a minimal system of relations $R(T)$ such that $\End_{H}(T)^{op}\cong\K\tq{H}{T}/\left\langle R(T)\right\rangle $. Then we have the following result from \cite{ABS-cta}.
\begin{theorem}
\label{thm:relations}
 The quiver $\cq{T}$ is obtained from the quiver $\tq{H}{T}$ with relations $R(T)$ by replacing each relation with an arrow in the opposite direction.
\end{theorem}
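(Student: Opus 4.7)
I would follow Assem--Brüstle--Schiffler's approach: realize the cluster-tilted algebra $C=\End_{\cc}(T)^{op}$ as a trivial extension of the tilted algebra $B=\End_{H}(T)^{op}$ and then read off the quiver from this presentation. The starting point is the description of the cluster category $\cc_{H}$ as the orbit category $D^{b}(\Mod H)/F$ with $F=\tau^{-1}\circ[1]$, yielding
\[\Homom_{\cc}(T,T) \;=\; \bigoplus_{n\in\mathbb{Z}}\Homom_{D^{b}(H)}(T, F^{n}T).\]
Since $T$ is tilting and $H$ is hereditary, only $n=0$ and $n=1$ give nonzero contributions, so
\[\End_{\cc}(T) \;\cong\; \End_{H}(T)\,\oplus\,\operatorname{Ext}^{1}_{H}(T,\tau^{-1}T)\]
as vector spaces. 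A direct check using the shift in the orbit category shows that the $\operatorname{Ext}$-summand is a square-zero ideal under the induced multiplication, producing an algebra isomorphism $C\cong B\ltimes E$ with $E=\operatorname{Ext}^{1}_{H}(T,\tau^{-1}T)$ viewed as a $B$-bimodule through the tilting equivalence $\Homom_{H}(T,-)$.

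The second step transports $E$ entirely to the tilted side. Using Auslander--Reiten duality for $H$, combined with the tilting equivalence between $T^{\perp}\subset\Mod H$ and $\Mod B$, one identifies $E\cong\operatorname{Ext}^{2}_{B}(DB,B)$ as $B$-bimodules. This is the characteristic ``relation extension'' description of cluster-tilted algebras.

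The third step is purely homological: for any basic algebra $B\cong\K\tq{H}{T}/\langle R(T)\rangle$ and any $B$-bimodule $M$, the quiver of the trivial extension $B\ltimes M$ is obtained from the quiver of $B$ by adjoining, for every basis element of $e_{i}(M/(\mathrm{rad}\,M+M\,\mathrm{rad}))e_{j}$, an arrow from $j$ to $i$. When $M=\operatorname{Ext}^{2}_{B}(DB,B)$, a minimal system of relations $R(T)$ gives exactly such a basis: a relation lying in $e_{j}(\mathrm{rad}^{2})e_{i}$ corresponds to a basis element in $e_{i}(\operatorname{Ext}^{2}_{B}(DB,B)/(\mathrm{rad}\cdot+\cdot\mathrm{rad}))e_{j}$. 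Hence each minimal relation contributes exactly one arrow in the opposite direction, which is the claim.

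The main obstacle will be the second step. The bimodule $E$ is defined via the Auslander--Reiten translation on the $H$-side, whereas $\operatorname{Ext}^{2}_{B}(DB,B)$ is intrinsic to $B$. Reconciling the two requires the nontrivial fact that under the tilting equivalence $\tau_{H}$ on $T^{\perp}$ corresponds to a shift of the Nakayama functor on a suitable subcategory of $D^{b}(\Mod B)$; this is where the hereditary hypothesis on $H$ and the tilting hypothesis on $T$ combine essentially.
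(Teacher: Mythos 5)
The paper does not prove this statement at all: it is quoted as a known result from \cite{ABS-cta}, and your outline is precisely the argument of that reference --- realize $\End_{\cc}(T)^{op}$ as the relation-extension $B\ltimes\operatorname{Ext}^{2}_{B}(DB,B)$ of the tilted algebra $B=\End_{H}(T)^{op}$ via the orbit-category description of $\cc$, and then read off the quiver of a trivial extension, using that minimal relations of the tilted algebra give a basis of the top of the bimodule. Your sketch is correct as an outline and rightly isolates the hard step (identifying $\Homom_{D^{b}(H)}(T,\tau^{-1}T[1])$ with $\operatorname{Ext}^{2}_{B}(DB,B)$ as bimodules), so there is no competing proof in the paper to compare it with.
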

Recall that the quiver of the tilted algebra $\End_{H}(T)^{op}$ is always acyclic (see e.g.\ \cite[VIII.3.4]{ASS1}), so the only cycles occurring in $\cq{T}$ come from relations in $\tq{H}{T}$. If $\End_{H}(T)^{op}$ is hereditary there are no relations and $\cq{T}=\tq{H}{T}$.

The mutation of cluster-tilting objects yields a so-called exchange graph. Its vertices are given by the cluster-tilting objects and edges correspond to mutations. The key idea of the proof is to interpret a sequence of quiver mutations as a walk in the exchange graph and vice versa. For this we need the following result from \cite{CK2}.
\begin{theorem}
\label{thm:connected}
 Let $T$ be a cluster-tilting object and $T_v$ an indecomposable summand. Then the set of cluster-tilting objects containing $T_v$ as a summand form a connected subgraph $lk(T_v)$ of the exchange graph.
\end{theorem}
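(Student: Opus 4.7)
The plan is to identify the subgraph $lk(T_v)$ with the full exchange graph of a cluster category of rank $n-1$, and then to show that any such full exchange graph is connected. Complete $T_v$ to a cluster-tilting object $T = T_v \oplus \bar T$ and consider the additive subfactor
\[
 \cc' \;:=\; {}^{\perp}(T_v[1])\,\big/\,[T_v]
\]
of $\cc$, where ${}^{\perp}(T_v[1])$ is the full subcategory of objects $X$ with $\Homom_{\cc}(T_v,X[1])=0$ and $[T_v]$ is the ideal of morphisms factoring through $\mathrm{add}\,T_v$. The cluster-tilting reduction of Iyama--Yoshino (available because $\cc$ is $2$-Calabi--Yau) endows $\cc'$ with the structure of a $2$-Calabi--Yau triangulated category; in the cluster-category setting a further argument identifies $\cc'$ with the cluster category of a hereditary algebra having $n-1$ simples. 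Under this identification, cluster-tilting objects of $\cc$ containing $T_v$ as a summand correspond bijectively to cluster-tilting objects of $\cc'$ (via $T\mapsto T/T_v$), and a mutation in $\cc$ at a summand $T_i \neq T_v$ translates into the mutation in $\cc'$ at the image of $T_i$. Hence $lk(T_v)$ is identified with the \emph{full} exchange graph of $\cc'$.

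It therefore suffices to show that the full exchange graph of any cluster category $\cc_{H_0}$ of a finite-dimensional hereditary algebra is connected. By Theorem~\ref{thm:ct-objects}(a) each cluster-tilting object of $\cc_{H_0}$ arises from a tilting module over some hereditary algebra $H''$ derived equivalent to $H_0$; the Happel--Unger theorem gives, for each fixed $H''$, a connected graph of tilting $H''$-modules under single-summand tilting mutation, and Theorems~\ref{thm:ct-objects}(b) and~\ref{thm:clusterquivmut} lift each such tilting mutation to a single mutation of cluster-tilting objects in $\cc_{H_0}$. Consequently the cluster-tilting objects coming from tilting modules over a fixed $H''$ form a connected subgraph of the exchange graph. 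These subgraphs cover the entire exchange graph as $H''$ ranges over the hereditary algebras derived equivalent to $H_0$, and one shows they overlap via APR-tilts at sinks or sources, which exhibit one and the same cluster-tilting object as arising from tilting modules over two (related) hereditary algebras. Gluing along these overlaps yields connectedness of the full exchange graph.

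The principal obstacle will be verifying the cluster-tilting reduction rigorously: showing that $\cc'$ really carries the structure of a cluster category of a hereditary algebra with $n-1$ simples, together with the promised correspondences of cluster-tilting objects and mutations. This is a genuine piece of triangulated-category machinery resting on the $2$-Calabi--Yau property of $\cc$ and on an explicit description of the suspension and distinguished triangles in $\cc'$. A less abstract route, enough to make the picture concrete if not as a standalone proof, is to choose a derived-equivalent $H''$ and a tilting module $M$ over $H''$ for which $T_v$ corresponds to an indecomposable projective $P_v^{H''}$; then on the module side $\cc'$ is modelled by the cluster category of $H''/H''e_v H''$, itself hereditary with one fewer simple, and the reduction becomes transparent.
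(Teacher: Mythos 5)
First, note that the paper does not actually prove this statement: it is imported wholesale from \cite{CK2}, so there is no internal proof to compare against and your argument has to stand on its own. Your overall strategy --- identify $lk(T_v)$ with the full exchange graph of a rank-$(n-1)$ cluster category via Calabi--Yau reduction, then prove that full exchange graphs are connected --- is a legitimate modern route to this result, but both halves have genuine gaps as written.

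The clearest problem is in the second half. The assertion that ``the Happel--Unger theorem gives, for each fixed $H''$, a connected graph of tilting $H''$-modules under single-summand tilting mutation'' is false. Over a hereditary algebra an almost complete tilting module has two complements only when it is faithful; otherwise it has exactly one, and the tilting exchange graph can fall apart. Already for the Kronecker algebra it is the disjoint union of two infinite rays (the preprojective tilting modules $P_i\oplus P_{i+1}$ on one side and the preinjective ones on the other), so your per-$H''$ pieces are not connected and the proposed gluing along APR-tilts has nothing connected to glue. Connectedness of the full cluster-tilting exchange graph is true, but it is precisely the passage to the cluster category --- where every almost complete cluster-tilting object acquires a second complement, Theorem~\ref{thm:ct-objects}(b) --- that restores the missing edges; you would need to cite \cite{BMRRT} for this or argue it differently. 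The first half also needs more than you give it: that the reduction $\cc'$ is again the cluster category of a hereditary algebra with $n-1$ simples is a substantial theorem (Keller--Reiten recognition), and applying it requires exhibiting a cluster-tilting object of $\cc'$ whose endomorphism algebra is hereditary --- which is essentially the Bongartz-complement computation that forms the core of the present paper, so there is a real risk of circularity, and you acknowledge this only as an ``obstacle.'' Likewise the ``less abstract route'' presupposes that every rigid indecomposable becomes projective over some derived-equivalent hereditary algebra, another nontrivial existence statement. As it stands the proposal is a plausible programme rather than a proof.
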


\section{The proof and further remarks}
\label{sec:proof}
\begin{proof}[Proof of Proposition \ref{prop:mac}]
 It clearly suffices to prove the claim for subquivers $\Qu'$ with $n-1$ vertices when $\Qu$ has $n$ vertices. So let $\Qu=\m{\underline{i}}{\Qu_a}$ be obtained from the acyclic quiver $\Qu_a$ by a sequence of mutations $\mu_{\underline{i}}$. Consider the hereditary algebra $H':=\K\Qu_a$ and the corresponding cluster category $\cc:=\cc_{H'}$. Of course $H'$ itself is a tilting module and induces a cluster-tilting object in $\cc$ with $\cq{H'}=\tq{H'}{H'}=\Qu_a$. So by Theorem \ref{thm:clusterquivmut} we can apply $\mu_{\underline{i}}$ to $H'$ and obtain a cluster-tilting object $T$ with $\cq{T}\cong \Qu$. Now let $\Qu'$ be a subquiver of $\Qu$ with $n-1$ vertices and consider the summand $T_v$ of $T$ corresponding to the last vertex. If we now keep $T_v$ and only allow to mutate the other summands of $T$ this amounts to mutations of $\Qu'$ since the arrows incident to $T_v$ have no influence on the number of arrows between two other summands.

By Theorem \ref{thm:ct-objects}(a) we can regard $T$ as a tilting module over a hereditary algebra $H$. Now let $B$ be the Bongartz complement to $T_v$ in $\Mod H$ and set $C:=T_v \oplus B$. Again $C$ induces a cluster-tilting object in $\cc$ and as $lk(T_v)$ is connected by Theorem \ref{thm:connected} there is a sequence of mutations $\mu_{\underline{j}}$ that does not exchange $T_v$ such that $C=\m{\underline{j}}{T}$. So $\m{\underline{j}}{\Qu'}$ is the quiver obtained from $\cq{C}$ by deleting the vertex $T_v$. Denote this quiver by $\Qu_B$. It suffices to show that $\Qu_B$ is acyclic.

To do this we distinguish two cases: First suppose that $T_v$ is projective in $\Mod H$. Then clearly $C\cong{ _HH}$, so $\End_{H}(C)^{op}\cong H$ is hereditary and $\cq{C}=\tq{H}{C}$ is acyclic. Thus $\Qu_B$ is acyclic as a subquiver of an acyclic quiver. Now assume that $T_v$ is not projective. It is well known (see e.g.\ \cite[III.6.4]{happel-buch}) that in this case $\Ho[H]{T_v,B}=0$ and $\End_H(B)$ is hereditary. We claim that this implies that we have no relations among the summands of $B$. For a relation between two summands $B_i$ and $B_k$ of $B$ would either arise from two maps $B_i\to T_v \to B_k$ or from two maps $B_i\to B_j \to B_k$ for a third summand $B_j$ of $B$. But the former contradicts $\Ho[H]{T_v,B}=0$ and the latter is impossible since $\End_H(B)$ is hereditary. So by passing from $\tq{H}{C}$ to $\cq{C}$ no arrows among the summands of $B$ are added and the subquiver $\Qu_B$ remains acyclic.
\end{proof}

\begin{rem}
\label{rem:hubery}
 Let us mention that a first proof did not use the cluster category but worked with Hubery's support-tilting modules or tilting pairs instead of cluster-tilting objects, see \cite{hubery-cc,hubery}. In \cite{hubery} Hubery associates with any tilting pair a certain matrix (corresponding to the quivers $\cq{T}$ above) and shows that mutation of tilting pairs is compatible with matrix mutation (corresponding to Theorem \ref{thm:clusterquivmut}). Using results of \cite{hubery-cc} analogous to Theorem \ref{thm:connected} the crucial point is to show that a submatrix associated with the Bongartz complement of an indecomposable summand encodes an acyclic quiver.
\end{rem}

\begin{rem}
 A reformulation of Proposition \ref{prop:mac} is the following. If a given quiver $\Qu$ contains a mutation-cyclic subquiver, then $\Qu$ itself is mutation-cyclic. As it is easy to check whether a three-point-quiver is mutation-cyclic or not (see \cite{3ptmut}), this might be useful for showing that a given quiver is mutation-cyclic.
\end{rem}

\begin{rem}
 By the main result the condition that all proper subquivers are mutation-acyclic is necessary for a quiver to be mutation-acyclic. But it is not sufficient by the following easy counterexample. Consider the cyclic quiver with three vertices and doubled arrows. This is stable under mutation and thus mutation-cyclic, but all its proper subquivers are even acyclic.
\end{rem}

\bibliographystyle{alpha}
\bibliography{/HOME1/users/personal/warkm/Mathe/Darstellungstheorie/Latex-Projekte/bibliography/literatur}

\newcommand{\etalchar}[1]{$^{#1}$}
\begin{thebibliography}{BMR{\etalchar{+}}06}

\bibitem[ABS08]{ABS-cta}
I.~Assem, T.~Brüstle, and R.~Schiffler.
\newblock {Cluster-tilted algebras as trivial extensions.}
\newblock {\em Bull. Lond. Math. Soc.}, 40(1):151--162, 2008.

\bibitem[ARS95]{ARS}
Maurice Auslander, Idun Reiten, and Sverre~O. Smal{\o}.
\newblock {\em {Representation theory of Artin algebras.}}
\newblock {Cambridge Studies in Advanced Mathematics. 36. Cambridge: Cambridge
  University Press. xiv}, 1995.

\bibitem[ASS06]{ASS1}
Ibrahim Assem, Daniel Simson, and Andrzej Skowro\'nski.
\newblock {\em {Elements of the representation theory of associative algebras.
  Vol. 1: Techniques of representation theory.}}
\newblock {London Mathematical Society Student Texts 65. Cambridge: Cambridge
  University Press. ix}, 2006.

\bibitem[BBH11]{3ptmut}
Andre Beineke, Thomas Brüstle, and Lutz Hille.
\newblock Cluster-cyclic quivers with three vertices and the markov equation.
\newblock {\em Algebras and Representation Theory}, 14:97--112, 2011.

\bibitem[BMR{\etalchar{+}}06]{BMRRT}
Aslak~Bakke Buan, Robert Marsh, Markus Reineke, Idun Reiten, and Gordana
  Todorov.
\newblock {Tilting theory and cluster combinatorics.}
\newblock {\em Adv. Math.}, 204(2):572--618, 2006.

\bibitem[BMR08]{BMRvia}
Aslak~Bakke Buan, Robert~J. Marsh, and Idun Reiten.
\newblock {Cluster mutation via quiver representations.}
\newblock {\em Comment. Math. Helv.}, 83(1):143--177, 2008.

\bibitem[CK06]{CK2}
Philippe Caldero and Bernhard Keller.
\newblock From triangulated categories to cluster algebras. {II}.
\newblock {\em Ann. Sci. \'Ecole Norm. Sup. (4)}, 39(6):983--1009, 2006.

\bibitem[FZ02]{cluster1}
Sergey Fomin and Andrei Zelevinsky.
\newblock {Cluster algebras. I: Foundations.}
\newblock {\em J. Am. Math. Soc.}, 15(2):497--529, 2002.

\bibitem[FZ03]{cdm-notes}
Sergey Fomin and Andrei Zelevinsky.
\newblock {Cluster algebras: notes for the CDM-03 conference.}
\newblock {Mazur, Barry (ed.) et al., Current developments in mathematics,
  2003. Somerville, MA: International Press. 1-34 (2003).}, 2003.

\bibitem[Hap88]{happel-buch}
Dieter Happel.
\newblock {\em {Triangulated categories in the representation theory of finite
  dimensional algebras.}}
\newblock {London Mathematical Society Lecture Note Series, 119. Cambridge
  etc.: Cambridge University Press. IX }, 1988.

\bibitem[Hub]{hubery}
Andrew Hubery.
\newblock {Acyclic cluster algebras via Ringel-Hall algebras}.
\newblock {Preprint available at the author’s home page}.

\bibitem[Hub10]{hubery-cc}
Andrew Hubery.
\newblock The cluster complex of an hereditary artin algebra.
\newblock {\em Algebras and Representation Theory}, pages 1--23, 2010.

\bibitem[Kel]{Keller_categorification}
Bernhard Keller.
\newblock Categorification of acyclic cluster algebras: an introduction.
\newblock \href{http://uk.arxiv.org/pdf/0801.3103.pdf}{Preprint
  arXiv:0801.3103}.

\end{thebibliography}

\end{document}